\numberwithin{equation}{section}
\newtheorem{theorem}{Theorem}
\theoremstyle{definition}
\newcommand{\cV}{\mathcal V}
\newcommand{\cH}{\mathcal H}
\newcommand{\cD}{\mathcal D}
\newcommand{\cW}{\mathcal W}
\newcommand{\bR}{\mathbb R}
\DeclareMathOperator{\meas}{meas}
\journal{CMAME}
\begin{document}

\begin{frontmatter}

\title{Discontinuous Galerkin method for an integro-differential equation 
modeling dynamic fractional order viscoelasticity}

\author{Stig Larsson}
\ead{stig@chalmers.se}

\address{Department of Mathematical Sciences,
  Chalmers University of Technology and University of Gothenburg,
  SE--412 96 Gothenburg,
  Sweden
}

\author{Milena Racheva}
\ead{milena@tugab.bg}

\address{Department of Mathematics,
  Technical University of Gabrovo, 5300 Gabrovo, Bulgaria
}

\author{Fardin Saedpanah}
\ead{f.saedpanah@uok.ac.ir; f\_saedpanah@yahoo.com}

\address{
Department of Mathematics, 
University of Kurdistan, P. O. Box 416, 
Sanandaj, Iran
}

%



\begin{abstract}
An integro-differential equation, modeling dynamic fractional order 
viscoelasticity, with a Mittag-Leffler type convolution kernel is considered. 
A discontinuous Galerkin method, based on piecewise constant 
polynomials is formulated for temporal semidiscretization of the problem. 
Stability estimates of the discrete problem are proved, that are used 
to prove optimal order a priori error estimates. The theory is illustrated by 
a numerical example. 
\end{abstract}

\begin{keyword}
integro-differential equation, 
fractional order viscoelasticity, discontinuous Galerkin method, 
weakly singular kernel, stability, a priori estimate.
\end{keyword}



\end{frontmatter}

\section{Introduction}
Fractional order integral/differential operators have proved to be very suitable 
for modeling memory effects of various materials, \cite{BagleyTorvik1983}. 
In particular, for modeling viscoelastic materials, for more details and references 
see \cite{FardinEJM2014}.  
The basic equations of the viscoelastic dynamic problem, that is a 
hyperbolic type integro-differential equations, 
can be written in the strong form,
\begin{equation}\label{strongform}
  \begin{aligned}
    &\rho\ddot{u}(x,t)-\nabla\cdot\sigma_0
    (u;x,t)
    \\
    &\quad+\int_0^t \!\beta(t-s)\nabla\cdot \sigma_0(u;x,s)\,
    ds=f(x,t)\quad&& \textrm{in} \;\,\Omega\times (0,T),\\
    &u(x,t)=0 \quad&& \textrm{on}\;\Gamma_{D}\times (0,T),\\
    &\sigma(u;x,t)\cdot n(x)=g(x,t)
       \quad&&\textrm{on}\;\Gamma_{N}\times (0,T),\\
    &u(x,0)=u_0(x)\quad&&\textrm{in}\;\,\Omega,\\
    &\dot{u}(x,0)=v_0(x)\quad&&\textrm{in}\;\,\Omega,
  \end{aligned}
\end{equation}
(throughout this text we use `$\cdot$' to denote `$\frac{\partial}{\partial t}$')
where $u$ is the displacement vector, $\rho$ is the (constant) mass density,
$f$ and $g$ represent, respectively, the volume and surface loads, 
$\sigma_0$ is an elastic stress according to
\begin{equation*} 
    \sigma_0(u)=2\mu \epsilon(u)+\lambda{\rm{Tr}}(\epsilon(u))
      {\rm  I},
\end{equation*}
and 
the stress is
\begin{equation*} 
  \sigma(u)=\sigma_0(u)-\int_0^t \beta(t-s)\sigma_0(u(s))\,ds,
\end{equation*}
where $\lambda , \mu>0$ are elastic constants of
Lam$\acute{\rm e}$ type, $\epsilon$
is the strain which is defined by 
$\epsilon(u)=\frac{1}{2}\big(\nabla u+(\nabla u)^T\big)$. 
Here, $\beta$ 
is the convolution kernel
\begin{equation} \label{beta}
  \begin{split}
    \beta(t) &= - \gamma\frac{d}{dt}
    \Big({\rm E}_{\alpha}(-(t/\tau)^{\alpha})\Big)
     = \gamma\frac{\alpha}{\tau}\Big(\frac{t}{\tau}\Big)^{\alpha-1}
        {\rm E}_\alpha'\Big(-\big(\frac{t}{\tau}\big)^{\alpha}\Big)
    \approx Ct^{-1+\alpha},\,\,t \to 0,
  \end{split}
\end{equation}
where $0<\gamma<1$, $\tau>0$ is the relaxation time and
 ${\rm E}_{\alpha}(z) = \sum_{k=0}^{\infty}\frac{z^k}{\Gamma(1+\alpha k)}$
is the Mittag-Leffler function of order $\alpha\in(0,1)$. 
The convolution kernel is weakly singular and $\beta\in L_1(0,\infty)$ with
$\int_0^\infty \!\beta(t)\,dt=\gamma$. 

Well-posedness of the model problem \eqref{strongform} and more general form 
of such equations in fractional order viscoelasticity have been studied in 
\cite{FardinEJM2014}, by means of Galerkin approximation methods. 
Continuous Galerkin methods of order one, both in time 
and space variables, have been applied to similar problems in 
\cite{StigFardin2010}, \cite{FardinBIT2013} and \cite{FardinIMAJNUMANAL2014}. 
Discontinuous Galerkin and continuous Galerkin method, respectively, 
in time and space variables have been applied to a dynamic model problem 
in linear viscoelasticity (with exponential kernels) in \cite{RiviereShawWhiteman2007}. 
For more references on numerical and analytical treatment of 
integro-differential equations, among the extensive literature, see e.g., 
\cite{LinThomeeWahlbin1991}, \cite{PaniThomeeWahlbin1992}, 
\cite{McLeanSloanThomee2006}, \cite{ShawWhiteman2004}, 
\cite{AdolfssonEnelundLarssonRacheva2006}, and their references. 

Here, we formulate the discontinuous Galerkin method dG(0), 
based on piecewise constant polynomials in the time variable, 
for the temporal semidiscretization of the problem. 
We prove stability estimates for the 
discrete problem, that are used to prove optimal order a priori error estimates 
for the displacement $u$ and velocity $\dot u$. 
Then we illustrate the theory 
by a numerical example. 
The present work extends previous works, e.g., 
\cite{ShawWhiteman2004} and \cite{AdolfssonEnelundLarsson2008} 
on quasi-static $(\rho \ddot{u}\approx 0)$ linear and fractional order 
viscoelasticity,  to the dynamic fractional order case. 

The convolution integral in the model problem generates a growing amount of 
data that has to be stored and used in each time step. 
Lubich's convolution quadrature \cite{Lubich1988}, that has been  
improved in \cite{SchadleLopezLubich2008},  has been commonly 
used for this integration. See \cite{AdolfssonEnelundLarsson2008} and 
references therein for examples of application of this approach and a different 
approach, the so-called ``sparse quadrature'', 
that was introduced  in \cite{SloanThomee1986}, but only for the case of a kernel 
without singularity. See \cite{AdolfssonEnelundLarsson2003}, where 
the same procedure has been extended to the case of the singular kernel.
We note that, when the exponential decaying kernel, 
in linear viscoelasticity, is represented as a Prony series, 
it results in a recurrence formula 
for history updating, see \cite{ShawWhiteman2004}. 
This means that, in this case we do not use convolution 
quadrature. 

In general we do not have global regularity of solutions, 
see \cite{FardinEJM2014}, due to regularity of the kernel and 
mixed boundary conditions,  
which calls for adaptive methods based on a posteriori error analysis. 
We plan to address these issues (numerical adaptation methods together 
with sparse quadrature) and full discrete space-time 
discontinuous Galerkin and continuous Galerkin methods  
in future work.

In the next section, we provide some definitions and the weak formulations 
of the model problem. In $\S 3$ we formulate the discontinuous Galerkin 
method. Then in $\S 4$ we show an energy identity and stability estimates 
for the discrete problem, that is used in $\S 5$ to prove optimal order 
a priori error estimates. Finally, in $\S6$, we illustrate that the dG(0) method 
capture the mechanical behavior of the model problem and we  
investigate the rate of convergence $O(k)$, by a numerical example. 

\section{Preliminaries}
We let $\Omega \subset\mathbb{R}^d, \, d=2,3$, be a bounded polygonal domain 
with boundary $\Gamma=\Gamma_D\cup\Gamma_N$, where $\Gamma_D$ and $\Gamma_N$ are 
disjoint and $\meas(\Gamma_D)\not=0$.
We introduce the function spaces
$\cH=L_2(\Omega)^d,\,\cH_{\Gamma_N}=L_2(\Gamma_N)^d,\,$ 
 and $\cV=\{v\in H^1(\Omega)^d:v\!\!\mid_{\Gamma_D}=0\}$.
We denote the norms in $\cH$ and $\cH_{\Gamma_N}$ by
$\lVert\cdot\rVert$ and $\lVert\cdot\rVert_{\Gamma_N}$, respectively, 
and we equip $\cV$ with the inner product
$a(\cdot,\cdot)$ and norm $\lVert v\rVert_{\cV}^2=a(v,v)$,
where (with the usual summation convention)
\begin{equation}\label{bilinear}
 a(v,w)=\int_{\Omega}\!\big(2\mu\epsilon_{ij}(v)\epsilon_{ij}(w) +
\lambda\epsilon_{ii}(v)\epsilon_{jj}(w)\big)\,dx,\quad v,w\in \cV\,,
\end{equation}
which is a coercive bilinear form on $\cV$.

Now, we can write the weak form of the equation of motion as:
Find $u(t)\in \cV$ such that
$u(0)=u_0$, $\dot{u}(0)=v_0,$ and
\begin{equation}\label{weakform1}
  \begin{split}
    \rho (\ddot{u}(t),v) + a(u(t),v)
    -& \int_0^t \beta(t-s) a(u(s),v) \,ds \\
     &= (f(t),v) + (g(t),v)_{\Gamma_N},
    \quad \forall v \in \cV ,\,t\in (0,T),
  \end{split}
\end{equation}
with $(g(t),v)_{\Gamma_N}=\int_{\Gamma_N}\! g(t)\cdot v\,dS$.

Defining the new variables $u_1=u$ and $u_2=\dot{u}$
we write the velocity-displacement form of \eqref{weakform1} as: 
Find $u_1(t),\,u_2(t)\in \cV$ such that
$u_1(0)=u_0$, $u_2(0)=v_0$, and
\begin{equation}\label{weakform2}
  \begin{split}
    &a\big(\dot{u}_1(t),v_1\big)-a\big(u_2(t),v_1)=0,\\
    &\rho (\dot{u}_2(t),v_2) + a(u_1(t),v_2)
    - \int_0^t \beta(t-s) a(u_1(s), v_2) \, ds \\
    &\qquad\qquad\qquad\quad\quad
           = (f(t),v_2) + (g(t),v_2)_{\Gamma_N},
    \quad \forall v_1,v_2 \in \cV ,\,t\in (0,T),
  \end{split}
\end{equation}
that is used for discontinuous Galerkin formulation. 

We recall that the positive convolution kernel $\beta$ is weakly singular, 
that is, $\beta>0$ is singular at the origin, but 
$\| \beta \|_{L_1(0,\infty)}=\gamma<1$. 
For our analysis, we define the function 
\begin{equation} \label{eta}
  \eta (t)=1-\int_0^t \beta(s)\, ds,
\end{equation}
and it is easy to see that 
\begin{equation} \label{eta-property}
  \eta(0)=1,\quad \lim_{t\to\infty} \eta(t)=1-\gamma <1,\quad \dot \eta(t)=-\beta(t).
\end{equation}

We set $Au=-\nabla\cdot \sigma_0(u)$
with $\mathcal{D}(A)=H^2(\Omega)^d\cap \cV$, 
and we note that with homogeneous boundary conditions 
in \eqref{strongform}, i.e. $\Gamma_D=\Gamma$ or $g=0$, 
we have  $a(u,v)=(Au,v)$ for $u \in \cD(A)$, $v \in \cV$. 
It is known that $A$ can be extended to 
a self-adjoint, positive definite, unbounded operator on $\cH$. 
Then we may define $A^l,\ l \in \bR$ by 
\begin{equation*}
  A^l v =\sum_{k=1}^\infty \lambda_k^l (v ,\varphi_k) \varphi_k,
\end{equation*}
where $\{(\lambda_k, \varphi_k)\}_{k=1}^\infty$ are the eigenpairs of 
the operator $A$, see e.g., \cite{Thomee_Book}. 
We also use the norms
\begin{equation*}
  \|v\|_l=\|A^{l/2} v\|=\sqrt{(A^l v,v)}, \qquad l \in \bR,
\end{equation*}
and denote $\|v\|=\|v\|_0=\|v\|_\cH$. 
See \cite{FardinEJM2014} for more details and the regularity of the 
solution of the model problem \eqref{strongform}.

\section{The discontinuous Galerkin method}
Here we formulate the discontinuous Galerkin method, dG(0), that is 
based on piecewise constant polynomials, for temporal discretization 
of the model problem \eqref{strongform} with the weak 
form \eqref{weakform2}. 

Let $0 = t_0 < t_1 , \ldots < t_N = T$ be a temporal mesh, $I_n = (t_{n-1},t_n)$ 
denote the time intervals and $k_n = t_n - t_{n-1}$ denote the time steps. 
The discrete finite element  space is 
\begin{equation*}
  \cW_D = \left\{ w = (w_1,w_2): 
  w_i|_{I_n} = w_{i,n} \in \cD(A), \ n = 1,\ldots,N\right\}.
\end{equation*}
We note that $w \in \cW_D$ is piecewise constant in time and in general 
is not continuous at the time nodes $t_n, n=1,\ldots,N$, so we use the following notations: 
$w_n = w|_{I_n} = w_{n-1}^+ = w_n^-$ 
and $[w]_n = w_n^+-w_n^-=w_{n+1} - w_n$ for the jump terms.

Then, recalling \eqref{weakform2},  the dG(0) method is to find 
$U=(U_1,U_2) \in \cW_D$ such that
\begin{equation} \label{dG0Expand}
  \begin{split}
    &\int_{I_n}\!\Big( a(\dot{U}_1,V_1)-a(U_2,V_1)\Big)\ dt
       +a([U_1]_{n-1},V_{1,n-1}^+)=0,\\
    &\int_{I_n}\!\Big(\rho (\dot{U}_2,V_2) + a(U_1,V_2)
    - \int_0^t \beta(t-s) a(U_1(s), V_2) \ ds \Big)\ dt \\
    &\qquad\qquad +\rho([U_2]_{n-1},V_{2,n-1}^+) \\
    &\qquad \qquad\qquad\qquad = 
    \int_{I_n}\! \Big((f,V_2) + (g,V_2)_{\Gamma_N} \Big)\ dt,
    \quad \forall V=(V_1,V_2) \in \cW_D ,\,t\in (0,T),\\
    &U_{1,0}^- =u_0, \quad U_{2,0}^-=v_0. 
  \end{split}
\end{equation}
Introducing an abstract operator $\tilde A: \cV \to \cV^*$, that is equivalent to $A$ 
with homogeneous Neumann boundary condition, and recalling the fact that, 
the functions in $\cW_D$ are piecewise constant with respect to time, we get, 
with $U_{1,0}=u_0$, $U_{2,0}=v_0$,
\begin{equation*} 
  \begin{split}
    &\tilde AU_{1,n}-k_n \tilde AU_{2,n}=\tilde AU_{1,n-1},\\
    &(k_n-\omega_{nn}) \tilde AU_{1,n}+\rho U_{2,n}
        =\rho U_{2,n-1} +\sum_{j=1}^{n-1} \omega_{nj} \tilde A U_{1,j} 
          +k_n ( \bar f_n +\bar g_n),
  \end{split}
\end{equation*}
where obviously for $n=1$ the sum on the right side is ignored  and 
\begin{equation}   \label{omega_fn_gn}
  \begin{split}
   \omega_{nj}&=\int_{I_n}\int_{t_{j-1}}^{t_j \wedge t }\beta(t-s)\ ds\ dt, \quad
   t_j\wedge t = \min (t_j,t),\\
   \bar f_n&=\frac{1}{k_n} \int_{I_n}\! f(t)\ dt, \quad
   \bar g_n=\frac{1}{k_n} \int_{I_n}\! \tilde g(t)\ dt ,
 \end{split}
\end{equation}
with $\tilde g(t) \in \cV^*$ such that 
$\langle \tilde g(t),v \rangle = (g(t),v)_{\Gamma_N}$, $\forall v \in \cV,\ t \in [0,T]$. 
This is used for computer implementations. 

Now, we define the function space $\cW$ that consists of functions 
that are piecewise smooth with respect to the temporal mesh with 
values in $\cD (A)$. We note that $\cW_D \subset \cW$. 
Then we define the bilinear form $B:\cW \times \cW \to \bR$ and the linear form 
$L:\cW \to \bR$ by 
\begin{equation*}
  \begin{split}
    B((u_1,u_2),(v_1,v_2))
    &=\sum_{n=1}^N \int_{I_n} \Big \{
    a(\dot u_1,v_1)-a(u_2,v_1)\\
    &\quad +\rho(\dot u_2,v_2)+a(u_1,v_2)
    -\int_0^t \beta(t-s)a(u_1(s),v_2(t))\, ds \Big \}\, dt\\
    &\quad +\sum_{n=1}^{N-1}\left \{a([u_1]_n,v_{1,n}^+) 
                                                          +\rho([u_2]_n,v_{2,n}^+) \right \}\\
    &\quad + a(u_{1,0}^+,v_{1,0}^+) + \rho(u_{2,0}^+,v_{2,0}^+),\\
    L((v_1,v_2))
    &=\sum_{n=1}^N \int_{I_n} \Big( (f,v_2) 
    + (g,v_2)_{\Gamma_N} \Big) \  dt
         + a(u_0,v_{1,0}^+) + \rho(v_0,v_{2,0}^+).
  \end{split}
\end{equation*}
Then $U=(U_1,U_2) \in \cW_D$, the solution of the discrete problem 
\eqref{dG0Expand}, satisfies
\begin{equation} \label{dG0}
  \begin{split}
    &B(U,V)=L(V),\qquad \forall V=(V_1,V_2) \in \cW_D, \\
    &U_0^-=(U_{1,0}^-,U_{2,0}^-)=(u_0,v_0).
  \end{split}
\end{equation}

We note that the solution $(u_1,u_2)$ of \eqref{weakform2} also 
satisfies 
\begin{equation} \label{weakform2Compact}
  \begin{split}
    &B((u_1,u_2),(v_1,v_2))=L((v_1,v_2)),\qquad \forall (v_1,v_2) \in \cW, \\
    &(u_1(0),u_2(0))=(u_0,v_0),
  \end{split}
\end{equation}
such that the Galerkin's orthogonality holds for the error 
$e=(e_1,e_2)=(U_1,U_2)-(u_1,u_2)$, that is, 
\begin{equation} \label{GalerkinOrthogonality}
   B(e,V)=0,\qquad \forall V=(V_1,V_2) \in \cW_D.
\end{equation}

\section{Stability}
Here we prove a stability identity and stability estimates that are used in 
a priori error analysis. To this end, we need to prove a stability identity 
for a slightly different problem, that is, 
$U \in \cW_D$ such that
\begin{equation} \label{dG0-General}
  \begin{split}
    &B(U,V)=\hat L(V),\qquad \forall V \in \cW_D, \\
    &U_0^-=(U_{1,0}^-,U_{2,0}^-)=(u_0,v_0),
  \end{split}
\end{equation}
where the linear form $\hat L: \cW \to \bR$ is defined by 
\begin{equation*}
  \hat L((v_1,v_2))=\sum_{n=1}^N \int_{I_n} a(f_1,v_1)+ (f_2,v_2)\, dt
         + a(u_0,v_{1,0}^+) + \rho(v_0,v_{2,0}^+).
\end{equation*}

These terms are dictated by the error equation in \eqref{ThetaEquation} below. 
Note in particular that no traction data term $(g,v_2)_{\Gamma_N}$ is needed. 
Recalling $\eta$ from \eqref{eta}, we define 
\begin{equation} \label{eta-n}
  \eta_n=\frac{1}{k_n} \int_{I_n} \eta (t)\, dt 
  =1-\frac{1}{k_n}\int_{I_n}\int_0^t \beta(s)\, ds\,dt,
\end{equation}
with $\eta_0=1$. 
We also denote the backward difference operator, for $V_n$,
\begin{equation} \label{BackwardDifference}
  \partial_n V_n= \frac{V_n-V_{n-1}}{k_n}.
\end{equation}
Obviously we have
\begin{equation} \label{BackwardDifference-p1}
  \begin{split}
    k_n \partial_n (W_nV_n)
    &=W_nV_n - W_{n-1}V_{n-1}\\
    &=W_nV_n - W_{n-1}V_n + W_{n-1}V_n - W_{n-1}V_{n-1}\\
    &=k_n \partial_n W_n V_n + k_n W_{n-1} \partial_n V_n,
  \end{split}
\end{equation}
that also implies
\begin{equation} \label{BackwardDifference-p2}
  \begin{split}
    \partial_n (V_nV_n) + k_n (\partial_n V_n \partial_n V_n)
    &=V_n \partial_n V_n+V_{n-1}\partial_n V_n+\partial_n V_n k_n \partial_n V_n\\
    &=\partial_n V_n(V_n+V_{n-1}+V_n-V_{n-1})\\
    &=2V_n \partial_n V_n.
  \end{split}
\end{equation}

We also define the standard $L_2$-projection 
$P_{k,n}:L_2(I_n)^d \to \mathbb{P}_0^d(I_n)$ by 
\begin{equation*}  
  \int_{I_n} \! (P_{k,n} v -v) \ dt=0, \quad \forall v\in L_2(I_n)^d,  
\end{equation*}
where $\mathbb{P}_0^d$ denotes all vector valued constant polynomials on $I_n$. 
We use the obvious notation $P_k$ over the interval $(0,T)$, i.e, 
$P_{k,n} = P_k|_{I_n}$. 
It is easy to see that 
\begin{equation}  \label{PknProperty}
  P_{k,n} v = \bar v = \frac{1}{k_n} \int_{I_n} \! v \ dt, \quad \forall v\in L_2(I_n)^d.   
\end{equation}

\begin{theorem}
Let $U=(U_1,U_2)$ be a solution of \eqref{dG0-General}. 
Then for any $T>0$ and 
$l\in \{0,-1\}$, 
we have the equality
\begin{equation} \label{dG0-StabilityIdentity}
  \begin{split}
    \eta_N  \| U_{1,N}\|_{l+1}^2&+\rho \| U_{2,N}\|_{l}^2\\
    &+\sum_{n=1}^N k_n 
      \Big\{-\partial_n \eta_n \| U_{1,n-1}\|_{l+1}^2
        +k_n\eta_n \|\partial_n U_{1,n}\|_{l+1}^2\Big\}\\
    &+\sum_{n=2}^N\sum_{j=1}^{n-1}
      \int_{I_n}\int_{I_j} \beta(t-s)\, ds\, dt
      \Big\{ \partial_n\|W_{1,n,j}\|_{l+1}^2 + k_n \|\partial_n W_{1,n,j}\|_{l+1}^2 \Big\}\\
    &+\rho \sum_{n=0}^{N-1} \| [U_2]_n\|_{l}^2\\
    &\!\!\!\!\! = \|u_0\|_{l+1}^2 +\rho \|v_0\|_{l}^2 \\
    &+2\int_0^T \big \{ \eta  a(P_k f_1,A^l U_1) + (P_k f_2,A^l U_2)\big\} \, dt\\
    &+2\int_0^T\int_0^t  \beta(t-s) a\Big(P_k f_1(t),A^l(U_1(t)-U_1(s))\Big) \, ds \, dt,
  \end{split}
\end{equation}
where $W_{1,n,j}=U_{1,n} - U_{1,j}$. 
All terms on the left side are non-negative. 

Moreover, for some $C=C(\gamma,\rho)$, we have the stability estimate
\begin{equation} \label{dG0-StabilityEstimate}
  \| U_{1,N}\|_{l+1}+\| U_{2,N}\|_{l}
  \leq  C \Big\{
  \|u_0\|_{l+1} + \|v_0\|_{l} +\int_0^T \|f_1\|_{l+1} + \|f_2\|_{l} \, dt \Big\}.
\end{equation}
\end{theorem}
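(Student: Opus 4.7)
The plan is to test the discrete variational equation \eqref{dG0-General} with the energy pair $V_1=A^{l}U_1$, $V_2=A^{l}U_2$ on each slab $I_n$, and to exploit both the symmetry of $a(\cdot,\cdot)$ and the fact that $\dot U_i\equiv 0$ on every $I_n$, since $\cW_D$ consists of functions that are piecewise constant in time. To make the cross terms cancel, I would multiply the first equation of \eqref{dG0-General} by $2\eta_n$ and the second by $2$. Before summing I would rewrite the convolution contribution by splitting $U_1(s)=U_{1,n}-(U_{1,n}-U_1(s))$ inside $\int_0^{t}\beta(t-s)\,a(U_1(s),A^{l}U_{2,n})\,ds$ and using $\int_{I_n}\int_0^{t}\beta(t-s)\,ds\,dt=k_n(1-\eta_n)$ from \eqref{eta-n} together with the definition \eqref{omega_fn_gn} of $\omega_{nj}$; this turns that contribution into $2k_n\eta_n\,a(U_{1,n},A^{l}U_{2,n})+2\sum_{j<n}\omega_{nj}\,a(W_{1,n,j},A^{l}U_{2,n})$, the first piece of which is exactly what cancels $2\eta_n k_n\,a(U_{2,n},A^{l}U_{1,n})$ from the first equation, by symmetry of $a$.

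After the cancellation I would sum over $n=1,\dots,N$ and process the surviving pieces. The momentum jump is handled by the polar identity (equivalently \eqref{BackwardDifference-p2}), which after telescoping gives $\rho\|U_{2,N}\|_l^2-\rho\|v_0\|_l^2+\rho\sum_{n=0}^{N-1}\|[U_2]_n\|_l^2$. For the weighted displacement jump I would apply \eqref{BackwardDifference-p1} with $W_n=\eta_n$ and $V_n=\|U_{1,n}\|_{l+1}^2$; telescoping then yields $\eta_N\|U_{1,N}\|_{l+1}^2-\|u_0\|_{l+1}^2$ plus the residual $\sum_n k_n(-\partial_n\eta_n)\|U_{1,n-1}\|_{l+1}^2$, which is non-negative since $\dot\eta=-\beta\le 0$ by \eqref{eta-property}, while the $\eta_n k_n\|\partial_n U_{1,n}\|_{l+1}^2$ contribution arises from the $\|U_{1,n}-U_{1,n-1}\|_{l+1}^2=k_n^2\|\partial_n U_{1,n}\|_{l+1}^2$ remainder of the polar identity. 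For the memory sum I would use the first equation of \eqref{dG0-General}, which pointwise forces $U_{2,n}=\partial_n U_{1,n}-P_k f_1|_{I_n}$, together with the observation that $\partial_n W_{1,n,j}=\partial_n U_{1,n}$ is independent of $j$; this gives the decomposition $a(W_{1,n,j},A^{l}U_{2,n})=a(W_{1,n,j},A^{l}\partial_n W_{1,n,j})-a(W_{1,n,j},A^{l}P_k f_1)$, whose first term, via \eqref{BackwardDifference-p2}, supplies the bracket $\partial_n\|W_{1,n,j}\|_{l+1}^2+k_n\|\partial_n W_{1,n,j}\|_{l+1}^2$ in \eqref{dG0-StabilityIdentity}, while the second term moves to the right-hand side and reassembles, using $U_1(t)-U_1(s)=W_{1,n,j}$ for $s\in I_j$, $t\in I_n$, into the $\beta$-integral of $P_k f_1(t)\cdot(U_1(t)-U_1(s))$. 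Non-negativity of every left-hand block then follows from $\eta_N\ge 1-\gamma>0$, $\omega_{nj}\ge 0$, $-\partial_n\eta_n\ge 0$, and, for the memory bracket which is not non-negative term by term, from an Abel summation on $n$ exploiting the monotonicity of $\omega_{nj}/k_n$ in $n$ inherited from the monotonicity of $\beta$.

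To obtain \eqref{dG0-StabilityEstimate}, I would discard every left-hand term except $\eta_N\|U_{1,N}\|_{l+1}^2\ge(1-\gamma)\|U_{1,N}\|_{l+1}^2$ and $\rho\|U_{2,N}\|_l^2$, bound the three right-hand integrals by Cauchy--Schwarz in the forms $|a(u,A^{l}v)|\le\|u\|_{l+1}\|v\|_{l+1}$ and $|(u,A^{l}v)|\le\|u\|_l\|v\|_l$, use $\eta\le 1$ and $\int_0^{t}\beta(t-s)\,ds\le\|\beta\|_{L_1}=\gamma$, invoke the $L_2$-projection stability $\|P_k f_i\|\le\|f_i\|$, and finally close with Young's inequality and a discrete Gronwall argument. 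Because the identity \eqref{dG0-StabilityIdentity} holds with $T$ replaced by any $t_n$, the Gronwall step absorbs the $\max_n$-type estimates against the final-time energy. The main obstacle will be the memory bookkeeping: the convolution couples all time levels and has to be re-expressed as a non-negative quadratic functional of $W_{1,n,j}$ plus a clean source term, which is precisely what forces the asymmetric $2\eta_n$ weighting of the first equation and the substitution $U_{2,n}=\partial_n U_{1,n}-P_k f_1$; verifying the global, rather than term-wise, non-negativity of the memory bracket is the most delicate step.
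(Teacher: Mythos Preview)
Your proposal is correct and follows the same overall strategy as the paper: test with $V=A^{l}U$, rewrite the convolution via $\eta$ so that the memory contribution becomes a quadratic form in the differences $W_{1,n,j}$, use the relation $U_{2,n}=\partial_n U_{1,n}-P_{k,n}f_1$, apply the polar identity \eqref{BackwardDifference-p2} and the product rule \eqref{BackwardDifference-p1}, and verify non-negativity of the memory bracket by Abel summation in $n$ together with the monotonicity of $\beta$.

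There is one organizational difference worth noting. You weight the first (displacement) equation by $2\eta_n$ on each slab so that its cross term $-2\eta_n k_n\,a(U_{2,n},A^{l}U_{1,n})$ cancels the $\eta$-piece $2k_n\eta_n\,a(U_{1,n},A^{l}U_{2,n})$ coming from the second equation; the $\eta$-weighted displacement energy then emerges directly from the $\eta_n$-weighted jump term. The paper instead tests the first equation unweighted, shows (using $U_{2,n}=\partial_n U_{1,n}-P_{k,n}f_1$) that all first-equation contributions collapse to pure data, and then substitutes the same relation into the $\eta$-term of the second equation to produce the weighted displacement energy. Both routes land on the same identity; yours avoids one substitution at the cost of carrying the weight $\eta_n$ through the jump bookkeeping.

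One small remark on the stability step: you do not actually need a Gronwall argument. Since every left-hand term in \eqref{dG0-StabilityIdentity} is non-negative and the identity holds with $N$ replaced by any $m\le N$, a single max-over-$m$ step together with Cauchy--Schwarz, $\eta\le 1$, $\|\beta\|_{L_1}=\gamma$, the $L_1$-stability $\int_0^T\|P_k f\|\,dt\le\int_0^T\|f\|\,dt$, and the quadratic inequality $M^2\le D^2+CFM\Rightarrow M\le D+CF$ already gives \eqref{dG0-StabilityEstimate} with a constant depending only on $\gamma$ and $\rho$, not on $T$; invoking Gronwall would introduce an unnecessary exponential factor.
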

\begin{proof}
We organize our proof in five steps. 

1. First, we find a representation of $U_2$ in terms of $U_1$ and $f_1$. 
Setting $V_2=0$ in \eqref{dG0-General}, we have
\begin{equation*}
  \begin{split}
    \sum_{n=1}^N \int_{I_n} \big \{
      a(\dot U_1,V_1)&-a(U_2,V_1) \big \} \, dt
      +\sum_{n=1}^{N-1}a([U_1]_n,V_{1,n}^+) + a(U_{1,0}^+,V_{1,0}^+)\\ 
    &=\sum_{n=1}^N \int_{I_n} a(f_1,V_1)\, dt + a(u_0,V_{1,0}^+),
  \end{split}
\end{equation*}
that, considering the fact that $U_i,\  i=1,2$, are piecewise constant with respect to 
time, $\dot U_1=0$ and recalling \eqref{PknProperty}, we have
\begin{equation*}
  \begin{split}
    -\sum_{n=1}^N k_n &a(U_{2,n},V_{1,n}) 
    +\sum_{n=2}^{N} a([U_1]_{n-1},V_{1,n}) + a(U_{1,1},V_{1,1}) \\
    &=\sum_{n=1}^N k_n a(P_{k,n}f_1,V_{1,n})\, dt + a(u_0,V_{1,1}).
  \end{split}
\end{equation*}
Now, for some $n \in \{1,\dots,N\}$, we take $V_{1,n} \neq 0$ and $V_1=0$ otherwise, 
and we have
\begin{equation*}
  -k_n a(U_{2,n},V_{1,n}) + a(U_{1,n}-U_{1,n-1},V_{1,n})=k_n a(P_{k,n}f_1,V_{1,n}),
\end{equation*}
that implies
\begin{equation} \label{U2-U1}
  U_{2,n}=\frac{U_{1,n}-U_{1,n-1}}{k_n} - P_{k,n}f_1
  =\partial_nU_{1,n} - P_{k,n}f_1.
\end{equation}

2. Now, recalling function $\eta$ from \eqref{eta}, we use the representation 
\begin{equation*}
  \begin{split}
    a(U_1,V_2)&-\int_0^t \beta(t-s) a(U_1(s),V_2(t))\, ds\\
    &=\eta(t) a(U_1,V_2) + \int_0^t \beta(t-s) a(U_1(t)-U_1(s),V_2(t))\, ds,
  \end{split}
\end{equation*}
and we set $V=A^l U,\ l \in \{0,-1\}$ in \eqref{dG0-General}, to obtain
\begin{equation} \label{Stability-eq1}
  \begin{split}
    \sum_{n=1}^N \int_{I_n} 
    &\Big \{
      a(\dot U_1,A^l U_1)-a(U_2,A^l U_1)
      +\rho(\dot U_2,A^l U_2)+\eta(t)a(U_1,A^l U_2)\\
    &\quad +\int_0^t \beta(t-s)a(U_1(t)-U_1(s),A^l U_2(t))\, ds \Big \}\, dt\\
    &\quad +\sum_{n=1}^{N-1}\left \{a([U_1]_n,A^l U_{1,n}^+) 
                                                          +\rho ([U_2]_n,A^l U_{2,n}^+) \right \}\\
    &\quad + a(U_{1,0}^+,A^l U_{1,0}^+) + \rho(U_{2,0}^+,A^l U_{2,0}^+)\\
    &=\int_0^T a(f_1,A^l U_1)+(f_2,A^l U_2)\, dt 
      +a(u_0,A^l U_{1,0}^+) + \rho (v_0,A^l U_{2,0}^+).
  \end{split}
\end{equation}
Then, using \eqref{U2-U1} and $\dot U_1=0$ we have
\begin{equation*}
  \begin{split}
    \sum_{n=1}^N &\int_{I_n} \big \{
      a(\dot U_1,A^l U_1)-a(U_2,A^l U_1)\big \}\, dt
    +\sum_{n=1}^{N-1}a([U_1]_n,A^l U_{1,n}^+)+a(U_{1,0}^+,A^l U_{1,0}^+)\\
    &=\sum_{n=1}^N \int_{I_n} -a(\partial_n U_{1,n}-P_{k,n}f_1,A^l U_{1,n})\, dt\\
    &\quad +\sum_{n=1}^{N-1} a(U_{1,n+1}-U_{1,n},A^l U_{1,n+1})
                 +a(U_{1,1},A^l U_{1,1}) \\
    &=-\sum_{n=1}^N k_n a\Big(\frac{U_{1,n}-U_{1,n-1}}{k_n}-P_{k,n}f_1,A^l U_{1,n}\Big)\, dt\\
    &\quad +\sum_{n=1}^{N-1} a(U_{1,n+1}-U_{1,n},A^l U_{1,n+1})
                 +a(U_{1,1},A^l U_{1,1}) ,
  \end{split}
\end{equation*}
that, recalling \eqref{PknProperty}, we have 
\begin{equation*}
  \begin{split}
    \sum_{n=1}^N &\int_{I_n} \big \{
      a(\dot U_1,A^l U_1)-a(U_2,A^l U_1)\big \}\, dt
    +\sum_{n=1}^{N-1}a([U_1]_n,A^l U_{1,n}^+)+a(U_{1,0}^+,A^l U_{1,0}^+)\\
    &=-\sum_{n=2}^{N} a(U_{1,n},A^l U_{1,n})+\sum_{n=2}^{N} a(U_{1,n-1},A^l U_{1,n})\\
    &\qquad  -a(U_{1,1},A^l U_{1,1})+a(U_{1,0},A^l U_{1,1})\\
    &\quad +\sum_{n=2}^{N} a(U_{1,n},A^l U_{1,n})
                 -\sum_{n=2}^{N} a(U_{1,n-1},A^l U_{1,n})\\
     &\qquad +a(U_{1,1},A^l U_{1,1})+\sum_{n=1}^N\int_{I_n} a(f_1,A^l U_{1,n})\, dt\\
    &=a(U_{1,0},A^l U_{1,1})+\int_0^T a(f_1,A^l U_1)\, dt\\
    &=a(u_0,A^l U_{1,0}^+)+\int_0^T a(f_1,A^l U_1)\, dt .
  \end{split}
\end{equation*}
From this, $\dot U_2=0$ 
and the definition of the $L_2$ projection $P_k$, 
we can write \eqref{Stability-eq1} as 
\begin{equation} \label{Stability-eq2}
  \begin{split}
    &\sum_{n=1}^N \int_{I_n}  \eta(t)a(U_1,A^l U_2) \, dt\\
    &\quad +\sum_{n=1}^N \int_{I_n} \int_0^t 
                  \beta(t-s)a(U_1(t)-U_1(s),A^l U_2(t))\, ds \, dt\\
    &\quad\quad +\rho\sum_{n=1}^{N-1} ([U_2]_n,A^l U_{2,n}^+) 
       +\rho(U_{2,0}^+,A^l U_{2,0}^+)
       -\rho(v_0,A^l U_{2,0}^+)\\
    &=\int_0^T (f_2,A^l U_2)\, dt \\
    &= \int_0^T (P_k f_2,A^l U_2)\, dt.
  \end{split}
\end{equation}
Now, we need to study the three terms on the left side. 

3. For the first term on the left side of \eqref{Stability-eq2}, 
recalling \eqref{U2-U1} and $\eta_n$ from \eqref{eta-n}, 
we have
\begin{equation*}
  \begin{split}
    \sum_{n=1}^N \int_{I_n} \eta(t) a(U_1(t),A^l U_2(t)) \,dt
    &=\sum_{n=1}^N \int_{I_n} \eta(t)   a(U_{1,n},A^l U_{2,n})\ dt\\
    &= \sum_{n=1}^N k_n \eta_{n} a(A^{l/2} U_{1,n},\partial_n A^{l/2}  U_{1,n})\\
    &\quad -\sum_{n=1}^N \int_{I_n} \eta(t) a(U_{1,n},A^l P_{k,n}f_1(t))\ dt,
  \end{split}
\end{equation*}
that, using \eqref{BackwardDifference-p1} and \eqref{BackwardDifference-p2}, 
implies 
\begin{equation*}
  \begin{split}
    \sum_{n=1}^N \int_{I_n} &\eta(t) a(U_1(t),A^l U_2(t)) \,dt \\
    &=\frac{1}{2} \sum_{n=1}^N k_n \eta_{n} 
        \left\{\partial_n a(A^{l/2} U_{1,n},A^{l/2} U_{1,n}) 
        + k_n a(\partial_n A^{l/2} U_{1,n},\partial_n A^{l/2} U_{1,n})\right\}\\
    &\quad -\sum_{n=1}^N \int_{I_n}\eta a(U_{1,n},A^l P_{k,n}f_1)\ dt\\
    &=\frac{1}{2} \sum_{n=1}^N  k_n 
          \partial_n\left\{\eta_{n} a(A^{l/2} U_{1,n},A^{l/2} U_{1,n})\right\}\\
    &\quad - \frac{1}{2} \sum_{n=1}^N k_n 
          a(A^{l/2} U_{1,n-1},A^{l/2} U_{1,n-1}) \partial_n \eta_{n}\\
    &\quad + \frac{1}{2} \sum_{n=1}^N  
        k_n^2 \eta_{n} a(\partial_n A^{l/2} U_{1,n},\partial_n A^{l/2} U_{1,n})\\
    &\quad -\sum_{n=1}^N \int_{I_n} \eta a(U_{1,n},A^l P_{k,n}f_1)\,dt, 
  \end{split}
\end{equation*}
so we have 
\begin{equation*}
  \begin{split}
    \sum_{n=1}^N \int_{I_n} &\eta(t) a(U_1(t),A^l U_2(t)) \,dt \\
    &= \frac{1}{2} \eta_{N} a(A^{l/2} U_{1,N},A^{l/2} U_{1,N}) 
        - \frac{1}{2} \eta_{0} a(A^{l/2} U_{1,0},A^{l/2} U_{1,0})\\
    &\quad - \frac{1}{2} \sum_{n=1}^N k_n 
          a(A^{l/2} U_{1,n-1},A^{l/2} U_{1,n-1}) \partial_n \eta_{n} \\
    &\quad + \frac{1}{2} \sum_{n=1}^N 
        k_n^2 \eta_{n} a(\partial_n A^{l/2} U_{1,n},\partial_n A^{l/2} U_{1,n})
        -\int_0^T \eta a(U_1,A^l P_k f_1)\, dt. 
  \end{split}
\end{equation*}
Consequently, we have
\begin{equation} \label{Stability-term1}
  \begin{split}
    \sum_{n=1}^N \int_{I_n} &\eta(t) a(U_1(t),A^l U_2(t)) \,dt \\
    &=\frac{1}{2} \eta_{N} \|U_{1,N}\|_{l+1}^2 
        - \frac{1}{2} \| u_0 \|_{l+1}^2\\
    &\quad + \frac{1}{2} \sum_{n=1}^N k_n \big\{-\partial_n \eta_{n}\|U_{1,n-1}\|_{l+1}^2 
        + k_n \eta_{n} \| \partial_n U_{1,n}\|_{l+1}^2 \big\}\\
    &\quad -\int_0^T \eta a(U_1,A^l P_k f_1)\, dt.
  \end{split}
\end{equation}
Here, we note that $\partial_n \eta_n<0$. 
Indeed, changing the variable 
$t= t_{n-1} + k_n s$, for $t \in I_n$, $n \ge 2$, we have
\begin{equation*}
  \eta_{n} = \frac{1}{k_n} \int_{I_n} \eta(t) \,dt 
  = \int_0^1 \eta(t_{n-1}+sk_n)\,ds,
\end{equation*}
that implies 
\begin{equation*}
  \partial_n \eta_{n} 
  = \frac{1}{k_n} \int_0^1 \Big(\eta(t_{n-1}+sk_n)- \eta(t_{n-2}+sk_{n-1})\Big)\,ds < 0,
\end{equation*}
since $\eta$ is a decreasing function by \eqref{eta-property}.
And, for $n=1$, we have
\begin{equation*}
  \partial_1\eta_{1} 
  = \frac{1}{k_1} (\eta_{1} - \eta_{0}) 
  = -\frac{1}{k_1^2}\int_{I_1}\int_0^t \beta(s)\,ds\,dt < 0.
\end{equation*}

Now, we study the second term on the left side of \eqref{Stability-eq2}, 
that is the convolution integral. Recalling \eqref{U2-U1} 
and noting that $U_{1,n}-U_{1,j}=0$ for $n=j$, 
we have
\begin{equation*}
  \begin{split}
    \sum_{n=1}^N \int_{I_n} &\int_0^t \beta(t-s) a(U_1(t) -U_1(s),A^l U_2(t))\,ds\,dt\\
    &= \sum_{n=1}^N \sum_{j=1}^n 
         \int_{I_n}\int_{t_{j-1}}^{t_j\wedge t}
            \beta(t-s)\,ds\,dt \, 
              a\big(U_{1,n} - U_{1,j},A^l (\partial_n U_{1,n} -P_{k,n}f_1)\big)\\
    &= \sum_{n=2}^N \sum_{j=1}^{n-1}\int_{I_n}\int_{I_j}
             \beta(t-s)\,ds\,dt \, 
             a\big(U_{1,n} - U_{1,j},A^l (\partial_n U_{1,n}-P_{k,n}f_1)\big). 
  \end{split}
\end{equation*}
Then, recalling $W_{1,n,j}=U_{1,n}-U_{1,j}$ and using 
\begin{equation*}
\partial_n U_{1,n}=\frac{U_{1,n}-U_{1,n-1}}{k_n}
=\frac{(U_{1,n}-U_{1,j})-(U_{1,n-1}-U_{1,j})}{k_n}
=\frac{W_{1,n,j}-W_{1,n-1,j}}{k_n}
=\partial_n W_{1,n,j},
\end{equation*}
we have
\begin{equation*}
  \begin{split}
    \sum_{n=1}^N \int_{I_n} &\int_0^t \beta(t-s) a(U_1(t) -U_1(s),A^l U_2(t))\,ds\,dt\\
    &=\sum_{n=2}^N \sum_{j=1}^{n-1}\int_{I_n}\int_{I_j}
         \beta(t-s)\,ds\,dt\, a\big(W_{1,n,j},A^l (\partial_n W_{1,n,j} -P_{k,n}f_1)\big) .
  \end{split}
\end{equation*}
Using also 
\eqref{BackwardDifference-p2}, this yields
\begin{equation*}
  \begin{split}
    \sum_{n=1}^N \int_{I_n} &\int_0^t \beta(t-s) a(U_1(t) -U_1(s),A^l U_2(t))\,ds\,dt\\
    &= \frac{1}{2}\sum_{n=2}^N \sum_{j=1}^{n-1}\int_{I_n}\int_{I_j}
          \beta(t-s)\,ds\,dt\, \partial_n a(A^{l/2} W_{1,n,j},A^{l/2} W_{1,n,j})\\
    &\quad + \frac{1}{2}\sum_{n=2}^N 
                    k_n \sum_{j=1}^{n-1}\int_{I_n}\int_{I_j}
                    \beta(t-s)\,ds\,dt\, a(\partial_n A^{l/2} W_{1,n,j},\partial_n A^{l/2} W_{1,n,j})\\
    &\quad -\sum_{n=2}^N \sum_{j=1}^{n-1}\int_{I_n}\int_{I_j}
         \beta(t-s)\,ds\,dt\, a(W_{1,n,j},A^l P_{k,n}f_1).
  \end{split}
\end{equation*}
Consequently, we have
\begin{equation} \label{Stability-term2}
  \begin{split}
    \sum_{n=1}^N \int_{I_n} &\int_0^t \beta(t-s) a(U_1(t) -U_1(s),A^l U_2(t))\,ds\,dt\\
    &= \frac{1}{2}\sum_{n=2}^N \sum_{j=1}^{n-1}\int_{I_n}\int_{I_j}
          \beta(t-s)\,ds\,dt\, \partial_n \| W_{1,n,j}\|_{l+1}^2\\
    &\quad + \frac{1}{2}\sum_{n=2}^N k_n \sum_{j=1}^{n-1}
          \int_{I_n}\int_{I_j} \beta(t-s)\,ds\,dt\, \| \partial_n W_{1,n,j}\|_{l+1}^2\\
    &\quad -\int_0^T \int_0^t \beta(t-s) a(U_1(t)-U_1(s),A^l P_k f_1)\,ds\,dt,
  \end{split}
\end{equation}
where the second term at the right side is non-negative, since the kernel $\beta$ 
is a decreasing function. 
So we need to show that the first term is also non-negative. 
To this end, denoting
\begin{equation*}
  \beta_{n,j} = \frac{1}{k_n k_j}\int_{I_n}\int_{I_j} \beta(t-s)\,ds\,dt ,
\end{equation*}
we have
\begin{equation*}
  \begin{split}
     \sum_{n=2}^N \sum_{j=1}^{n-1}\int_{I_n}\int_{I_j}
        \beta(t-s)\,ds\,dt\, \partial_n \| W_{1,n,j}\|_{l+1}^2
     &=\sum_{n=2}^N k_n \sum_{j=1}^{n-1} k_j 
          \beta_{n,j}\, \partial_n \| W_{1,n,j}\|_{l+1}^2\\
     &=\sum_{j=1}^{N-1} k_j \sum_{n=j+1}^N k_n 
          \partial_n\{\beta_{n,j} \|W_{1,n,j}\|_{l+1}^2\}\\
     &\quad -\sum_{j=1}^{N-1} k_j \sum_{n=j+1}^N k_n 
          \|W_{1,n-1,j}\|_{l+1}^2 \, \partial_n \beta_{n,j},
  \end{split}
\end{equation*}
where we changed the order of summation and used \eqref{BackwardDifference-p1} 
for the last equality. 
Now it is necessary to show that both terms at the right side are non-negative. 
For the first term  we have
\begin{equation*} 
  \begin{split}
    \sum_{j=1}^{N-1} & k_j \sum_{n=j+1}^N k_n \partial_n
       \{ \beta_{n,j} \|W_{1,n,j}\|_{l+1}^2\}\\
    &=\sum_{j=1}^{N-1} k_j \sum_{n=j+1}^N
       \{\beta_{n,j} \|W_{1,n,j}\|_{l+1}^2 - \beta_{n-1,j} \|W_{1,n-1,j}\|_{l+1}^2\}\\
    &=\sum_{j=1}^{N-1} k_j \{\beta_{N,j}\|W_{1,N,j}\|_{l+1}^2
       -\beta_{j,j} \|W_{1,j,j}\|_{l+1}^2\}\\
    &=\sum_{j=1}^{N-1} k_j \beta_{N,j}\|W_{1,N,j}\|_{l+1}^2>0.
\end{split}
\end{equation*}
For the second term, we should show that $\partial_n \beta_{n,j}<0$. 
Indeed, changing the variable $t=t_{n-1}+k_n \tau$, we have
\begin{equation*}
  \beta_{n,j} = \frac{1}{k_n k_j} \int_{I_n} \int_{I_j} \beta(t-s)\,ds\,dt 
  = \frac{1}{k_j} \int_{I_j} \int_0^1 \beta(t_{n-1}+\tau k_n-s)\,d\tau\,ds,
\end{equation*}
and consequently
\begin{equation*}
  \partial_n \beta_{n,j} = \frac{1}{k_j} \int_{I_j}\int_0^1 
  ( \beta(t_{n-1}+\tau k_n-s)- \beta(t_{n-2}+\tau k_{n-1}-s))\,d\tau\,ds < 0,
\end{equation*}
since the kernel $\beta$ is decreasing. 

Finally, it remains to study the third part on the left side of 
\eqref{Stability-eq2}. Recalling $v_0=U_{2,0}^-$ we have
\begin{equation*} 
  \begin{split}
    \rho \sum_{n=1}^{N-1} ([U_2]_n,A^l U_{2,n}^+) 
       &+\rho (U_{2,0}^+,A^l U_{2,0}^+)
       -\rho (v_0,A^l U_{2,0}^+)\\
    &=\rho \sum_{n=0}^{N-1} ([U_2]_n,A^l U_{2,n}^+)
     =\rho \sum_{n=0}^{N-1} k_{n+1}(\partial_n U_{2,n+1},A^l U_{2,n+1}),
  \end{split}
\end{equation*}
that by \eqref{BackwardDifference-p2} implies
\begin{equation} \label{Stability-term3}
  \begin{split}
    \rho \sum_{n=1}^{N-1} &([U_2]_n,A^l U_{2,n}^+) 
       +\rho (U_{2,0}^+,A^l U_{2,0}^+)
       -\rho (v_0,A^l U_{2,0}^+)\\
    &=\frac{1}{2} \rho \sum_{n=0}^{N-1} 
       \Big\{ k_{n+1} \partial_n(A^{l/2} U_{2,n+1},A^{l/2} U_{2,n+1})\\
    &\qquad +k_{n+1}^2 (\partial_n A^{l/2} U_{2,n+1},\partial_n A^{l/2} U_{2,n+1}) \Big\}\\
    &=\frac{1}{2}\rho (A^{l/2} U_{2,N},A^{l/2} U_{2,N}) 
       - \frac{1}{2}\rho (A^{l/2} U_{2,0},A^{l/2} U_{2,0})\\
    &\qquad +\frac{1}{2}\rho \sum_{n=0}^{N-1} (A^{l/2} [U_2]_n,A^{l/2} [U_2]_n)\\
    &=\frac{1}{2}\rho \| U_{2,N}\|_{l}^2 - \frac{1}{2}\rho \| v_0\|_{l}^2
         +\frac{1}{2}\rho \sum_{n=0}^{N-1} \| [U_2]_n \|_{l}^2.
  \end{split}
\end{equation}

4. Hence, putting \eqref{Stability-term1}, \eqref{Stability-term2} and 
\eqref{Stability-term3} in \eqref{Stability-eq2}, 
we conclude the energy identity \eqref{dG0-StabilityIdentity}. 

5. Finally, we prove the stability estimate \eqref{dG0-StabilityEstimate}. 
Recalling the fact that all terms on the left side of the stability identity 
\eqref{dG0-StabilityIdentity} are non-negative, we have 
\begin{equation}   \label{LastStepInequality}
  \begin{split}
    \eta_N  \| U_{1,N}\|_{l+1}^2 +\rho \| U_{2,N}\|_{l}^2 
      &\leq  \|u_0\|_{l+1}^2 +\rho \|v_0\|_{l}^2 
      +2\int_0^T \big \{ \eta  a(P_k f_1,A^l U_1) + (P_k f_2,A^l U_2)\big\} \, dt\\
    &\quad +2\int_0^T\int_0^t  \beta(t-s) a\Big(P_k f_1(t),A^l(U_1(t)-U_1(s))\Big)\ ds\ dt,
  \end{split}
\end{equation}
Then, using the Cauchy-Schwarz inequality, and the facts that 
$\eta\leq 1$, $\|\beta\|_{L_1(0,\infty)}=\gamma$ and 
\begin{equation*}
  \int_0^T |P_k f|\ dt \leq \int_0^T |f|\ dt,
\end{equation*}
in a classical way,  we conclude the 
stability estimate \eqref{dG0-StabilityEstimate}, 
for some constant $C=C(\gamma,\rho)$. Now the proof is complete. 
\end{proof}

We note that, having more regularity of the solution, see [13], the 
energy identity (4.7) and the stability estimate (4.8) also hold for 
$l \in \mathbb{R}$.

\section{ A priori error estimates}
Here, we prove optimal order a priori error estimates for the displacement 
$u_1=u$ and the velocity $u_2=\dot u$. 

We denote the standard piecewise constant interpolation of  a function $v$ 
with $\tilde v$, corresponding to the partition 
$0 = t_0 < t_1 , \ldots < t_N = T$ of the interval $(0,T)$. 
We also recall the error estimates 
\begin{equation} \label{InterpolationErrorEstimate}
  \int_{I_n} |\tilde v - v|\ dt \leq C k_n\int_{I_n} | \dot v |\ dt. 
\end{equation}

\begin{theorem}
Let $(u_1,u_2)$ and $(U_1,U_2)$ be the solutions of \eqref{weakform2Compact} 
and \eqref{dG0}, respectively. 
Then, with $e=(e_1,e_2)=(U_1,U_2)-(u_1,u_2)$ and 
$C=C(\gamma,\rho)$, we have
\begin{equation} \label{Apriori1}
    \|e_{1,N}\|_1+\|e_{2,N}\|
    \le C \sum_{n=1}^N k_n \int_{I_n} \big\{\| \dot u_2\|_1+\| \dot u_1\|_2\big\}\, dt,\\
\end{equation}
\begin{equation} \label{Apriori2}
  \| e_{1,N} \|
    \le C \sum_{n=1}^N k_n \int_{I_n} \big\{\| \dot u_2\|+\| \dot u_1\|_1\big\}\, dt.
\end{equation}
\end{theorem}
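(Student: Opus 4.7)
The plan is a classical Galerkin-orthogonality reduction to the abstract stability bound \eqref{dG0-StabilityEstimate}. I would first introduce the decomposition $e = \theta - \omega$, where $\omega = u - \tilde u$ is the interpolation error and $\theta = U - \tilde u \in \cW_D$, taking $\tilde u = (\tilde u_1, \tilde u_2)$ to be the right-endpoint piecewise constant interpolant. With this choice $\omega_i(t_n^-) = 0$ for every $n$, so in particular $e_{i,N} = \theta_{i,N}$ at the final time and the problem collapses to estimating $\theta$.

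By Galerkin orthogonality \eqref{GalerkinOrthogonality}, $B(\theta, V) = B(\omega, V)$ for all $V \in \cW_D$. The key step is to recognize $B(\omega, V)$ as $\hat L(V)$ for suitable $f_1, f_2$ with zero initial data. Because $V_1, V_2$ are piecewise constant in time, the time-derivative integrals $\int_{I_n} a(\dot\omega_1, V_1)\, dt$ and $\int_{I_n} \rho(\dot\omega_2, V_2)\, dt$ reduce by the fundamental theorem of calculus to the boundary values $\omega_i(t_n^-) - \omega_i(t_{n-1}^+)$. Combined with the jump terms $a([\omega_1]_n, V_{1,n}^+)$, $\rho([\omega_2]_n, V_{2,n}^+)$ and with the initial contribution $a(\omega_{1,0}^+, V_{1,0}^+) + \rho(\omega_{2,0}^+, V_{2,0}^+)$ (whose counterparts in $L$ match $u_1(0) = u_0$, $u_2(0) = v_0$), a direct telescoping that uses $\omega_i(t_n^-) = 0$ cancels every contribution coming from the $\dot\omega_i$ blocks. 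What remains is
\begin{equation*}
  B(\omega, V) = \sum_{n=1}^N \int_{I_n} \Bigl\{ -a(\omega_2, V_1) + a(\omega_1, V_2) - \int_0^t \beta(t-s) a(\omega_1(s), V_2(t))\, ds \Bigr\}\, dt,
\end{equation*}
which is exactly $\hat L(V)$ with vanishing initial data and with $f_1 = -\omega_2$, $f_2 = A\omega_1 - \int_0^t \beta(t-s) A\omega_1(s)\, ds$, after using $a(w, v) = (Aw, v)$ for $w \in \cD(A)$.

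Then I would apply \eqref{dG0-StabilityEstimate} to $\theta$ with $l = 0$ to prove \eqref{Apriori1} and with $l = -1$ to prove \eqref{Apriori2} (the latter justified by the remark immediately after Theorem 1). This yields $\|\theta_{1,N}\|_{l+1} + \|\theta_{2,N}\|_l \le C \int_0^T \bigl(\|f_1\|_{l+1} + \|f_2\|_l\bigr)\, dt$. The convolution piece in $f_2$ is controlled by Fubini together with $\|\beta\|_{L_1(0,\infty)} = \gamma$, giving $\int_0^T \bigl\| \int_0^t \beta(t-s) A\omega_1(s)\, ds \bigr\|_l\, dt \le \gamma \int_0^T \|\omega_1\|_{l+2}\, dt$. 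Using $\|A^{r}\omega_1\| = \|\omega_1\|_{2r}$, the right-hand side becomes a sum of $\int_0^T \|\omega_2\|_{l+1}\, dt$ and $\int_0^T \|\omega_1\|_{l+2}\, dt$; the interpolation error estimate \eqref{InterpolationErrorEstimate}, applied componentwise in the appropriate spatial norm, then converts these into $\sum_n k_n \int_{I_n} \|\dot u_i\|_r\, dt$. Combined with $e_{i,N} = \theta_{i,N}$ this produces \eqref{Apriori1} and \eqref{Apriori2}.

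The main obstacle will be the telescoping in step two: one must carefully bookkeep the interior boundary values $\omega_i(t_{n-1}^+)$ and $\omega_i(t_n^-)$, the interface jumps with their shifted indices, and the initial contribution at $t_0^+$, and verify that they collapse to zero (using both $\omega_i(t_n^-) = 0$ and the matching of $u_1(0) = u_0$, $u_2(0) = v_0$ with the initial terms in $L$). Once this cancellation is in hand, everything downstream is routine: the stability estimate already handles the convolution and dissipation structure, and \eqref{InterpolationErrorEstimate} delivers the optimal first-order factor $k_n \int_{I_n} \|\dot u_i\|\, dt$ in both norms.
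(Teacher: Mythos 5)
Your proposal is correct and follows essentially the same route as the paper: decompose the error via the right-endpoint piecewise constant interpolant so that $\omega$ vanishes at the nodal values $t_n^-$, use Galerkin orthogonality to identify $B(\theta,V)$ with the perturbed form $\hat L(V)$ having $f_1=\pm\omega_2$ and $f_2=\mp A\bigl(\omega_1-\int_0^t\beta(t-s)\,\omega_1(s)\,ds\bigr)$ and zero initial data, then apply the stability estimate \eqref{dG0-StabilityEstimate} with $l=0$ and $l=-1$ together with $\|\beta\|_{L_1}=\gamma$ and \eqref{InterpolationErrorEstimate}. One small correction: the case $l=-1$ is already covered by the stability theorem itself (which is stated for $l\in\{0,-1\}$), so you need not invoke the remark about general $l\in\mathbb{R}$.
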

\begin{proof}
We set
\begin{equation*}
  e=(U_1,U_2)-(u_1,u_2)
  =\big((U_1,U_2)-(\tilde u_1,\tilde u_2)\big)+\big((\tilde u_1,\tilde u_2)-(u_1,u_2)\big)
  =\theta + \omega,
\end{equation*}
where $\tilde u_i, i=1,2$, is the standard piecewise constant interpolation 
of $u_i$. 
We can estimate $\omega$ by \eqref{InterpolationErrorEstimate}, 
so we need to find estimates for $\theta$. 
Recalling Galerkin's orthogonality \eqref{GalerkinOrthogonality}, we have
\begin{equation*}
  \begin{split}
    B(\theta,V)&=-B(e,V)-B(\omega,V)=-B(\omega,V)\\
    &=\sum_{n=1}^N \int_{I_n} \Big \{
      -a(\dot \omega_1,V_1)+a(\omega_2,V_1)\\
    &\qquad -\rho (\dot \omega_2,V_2)-a(\omega_1,V_2)
      +\int_0^t \beta(t-s)a(\omega_1(s),V_2(t))\, ds \Big \}\, dt\\
    &\quad -\sum_{n=1}^{N-1}\left \{a([\omega_1]_n,V_{1,n}^+) 
                                                          +\rho ([\omega_2]_n,V_{2,n}^+) \right \}\\
    &\quad - a(\omega_{1,0}^+,V_{1,0}^+) -\rho  (\omega_{2,0}^+,V_{2,n}^+),
  \end{split}
\end{equation*}
and, having the fact that $\omega_i, i=1,2$, vanish at the time nodes and 
$V_i$ are piecewise constant functions, we have 
\begin{equation} \label{ThetaEquation}
  B(\theta,V)=\sum_{n=1}^N \int_{I_n} \Big 
    \{a(\omega_2,V_1)+\Big(-A\big\{\omega_1
       +\int_0^t \beta(t-s) \omega_1(s) \, ds \big\},V_2\Big)\Big\}\, dt.
\end{equation}
Therefore $\theta$ satisfies \eqref{dG0-General} with 
$f_1=\omega_2$ and 
$f_2=-A\big\{\omega_1+\int_0^t \beta(t-s) \omega_1(s) \, ds \big\}$.
Hence, applying the stability estimate \eqref{dG0-StabilityEstimate} and recalling 
 $\theta_{i,0}=\theta_i(0)=0$, we have
\begin{equation} \label{StabilityTheta}
  \begin{split}
    \| \theta_{1,N}\|_{l+1}+\| \theta_{2,N}\|_{l}
  &\leq  C \Big\{
    \| \theta_{1,0}\|_{l+1} + \| \theta_{2,0}\|_{l} 
   +\int_0^T \| \omega_2 \|_{l+1} \,dt\\
  &\qquad + \int_0^T \| A\omega_1\|_l
               +\Big\|\int_0^t \beta(t-s) A\omega_1(s) \, ds \Big\|_{l} \, dt \Big\}\\
  &\leq  C \Big\{
    \int_0^T \| \omega_2 \|_{l+1} \,dt
     + \int_0^T \| A\omega_1\|_l \\
     &\qquad +\Big\|\int_0^t \beta(t-s) A\omega_1(s) \, ds \Big\|_{l} \, dt \Big\}. 
  \end{split}
\end{equation}
Now, we consider two choices $l=0,-1$. 

To prove the first  a priori error estimate \eqref{Apriori1}, we set 
$l=0$. Then, recalling $e=\theta+\omega$ and $\omega_{i,N}=0$, we have
\begin{equation*}
    \| e_{1,N}\|_1+\| e_{2,N}\|
  \leq  C \Big\{
    \int_0^T \| \omega_2 \|_1 \,dt
    +\int_0^T \| A\omega_1\|
    +\Big\|\int_0^t \beta(t-s) A\omega_1(s) \, ds \Big\| \, dt \Big\}. 
\end{equation*}
Now, using \eqref{InterpolationErrorEstimate}, we have 
\begin{equation*}
  \begin{split}
    &\int_0^T \| \omega_2 \|_1 \,dt 
      =\sum_{n=1}^N \int_{I_n} \|\tilde u_2 -u_2\|_1 \, dt 
      \leq C \sum_{n=1}^N k_n\int_{I_n} \|\dot u_2\|_1 \, dt,\\
    &\int_0^T \|A \omega_1\|\, dt 
      =\sum_{n=1}^N \int_{I_n} \| A(\tilde u_1 -u_1)\| \, dt
      \leq C \sum_{n=1}^N k_n\int_{I_n} \| A \dot u_1\| \, dt,\\
    &\int_0^T \Big\| \int_0^t \beta(t-s) A \omega_1(s) \, ds \Big\|\, dt 
      \leq C \int_0^T \int_0^t \beta(t-s)\|A \omega_1(s)\| \, ds \, dt\\
    &\qquad\qquad\qquad\qquad\qquad\qquad\quad\  
      \leq C \int_0^T \beta \, dt \int_0^T \|A \omega_1\|\, dt\\
    &\qquad\qquad\qquad\qquad\qquad\qquad\quad\  
      \leq C \gamma \sum_{n=1}^N \int_{I_n} \| A(\tilde u_1 -u_1)\| \, dt\\
    &\qquad\qquad\qquad\qquad\qquad\qquad\quad\  
      \leq C \sum_{n=1}^N k_n\int_{I_n} \| A\dot u_1\| \, dt,
  \end{split}
\end{equation*}
that, having $\|Av\| \leq \|v\|_2$, implies the first a priori error estimate \eqref{Apriori1}. 

For the second error estimate we choose $l=-1$ in 
\eqref{StabilityTheta}. 
Then, recalling $e=\theta+\omega$ and $\omega_{i,N}=0$, we have
\begin{equation*}
  \begin{split}
    \| e_{1,N}\| &+\| e_{2,N}\|_{-1}\\
    &\leq  C \Big\{
    \int_0^T \| \omega_2 \| \,dt
    +\int_0^T \| A\omega_1\|_{-1}
    +\Big\|\int_0^t \beta(t-s) A\omega_1(s) \, ds \Big\|_{-1} \, dt \Big\}.
  \end{split}
\end{equation*}
Now, using \eqref{InterpolationErrorEstimate}, we have 
\begin{equation*}
  \begin{split}
    &\int_0^T \| \omega_2 \| \,dt 
      =\sum_{n=1}^N \int_{I_n} \|\tilde u_2 -u_2\| \, dt 
      \leq C \sum_{n=1}^N k_n\int_{I_n} \|\dot u_2\| \, dt,\\
    &\int_0^T \| A\omega_1\|_{-1} \, dt 
      =\sum_{n=1}^N \int_{I_n} \| A(\tilde u_1 -u_1)\|_{-1} \, dt
      \leq C \sum_{n=1}^N k_n\int_{I_n} \| A\dot u_1\|_{-1} \, dt,\\
    &\int_0^T \Big\| \int_0^t \beta(t-s) A\omega_1(s) \, ds \Big\|_{-1} \, dt 
      \leq C \int_0^T \int_0^t \beta(t-s)\| A \omega_1(s)\|_{-1} \, ds \, dt\\
    &\qquad\qquad\qquad\qquad\qquad\qquad\quad\  
      \leq C \int_0^T \beta \, dt \int_0^T \|A \omega_1\|_{-1} \, dt\\
    &\qquad\qquad\qquad\qquad\qquad\qquad\quad\  
      \leq C \gamma \sum_{n=1}^N \int_{I_n} \| A(\tilde u_1 -u_1)\|_{-1} \, dt\\
    &\qquad\qquad\qquad\qquad\qquad\qquad\quad\  
      \leq C \sum_{n=1}^N k_n\int_{I_n} \| A\dot u_1\|_{-1} \, dt,
  \end{split}
\end{equation*}
that, having $\|Av\|_{-1} \leq \|v\|_1$, implies the second a priori error estimate 
\eqref{Apriori2}. 
Now the proof is complete.
\end{proof}

\section{Numerical example}
In this section we illustrate that dG(0) method capture the behavior of the 
solution and also its rate of convergence $O(k)$,  
by solving an example for a two dimensional square shape structure. 
We use the finite element method based on continuous 
piecewise linear polynomials for spatial discretization, and 
we use a uniform triangulation with mesh size $h$. 
Here we compute $\omega_{n,j}, \bar f_n$ and $\bar g_n$ 
in \eqref{omega_fn_gn}  using a simple 
quadrature, the midpoint rule. 

We consider the domain be the two dimensional unit square and 
the initial conditions: $u(x,0)=0\,\textrm{m}$,
$\dot{u}(x,0)=0\,\textrm{m/s}$,
the boundary conditions: 
$u=0$ at $x=0$, $g=(0,-1)\,\textrm{Pa}$ at $x=1$ and zero
on the rest of the boundary. The volume load is assumed to be 
$f=0\,\textrm{N/m}^3$. 
The model parameters are: $\gamma=0.5,\ \tau=1,\alpha=2/3 $ and 
$\rho=3000\,\textrm{kg/m}^3$.
The oscillatory behavior of the the solution of the model problem 
is illustrated in Figure 1, for different time steps $k_n=2^{-5}, 2^{-6}$. 

We also verify numerically the temporal rate of convergence $O(k)$ 
for $\|e_{1,N}\|$.
Lacking of an explicit solution we compare with a numerical solution 
with fine mesh sizes $h,\,k$. 
Here we consider 
$h=0.089095,\,k_{\text{min}}=2^{-6}$. 
The result is displayed in Figure 2.

\begin{figure}[htbp]
  \begin{center}
         \includegraphics[width=10cm,height=6cm]
                              {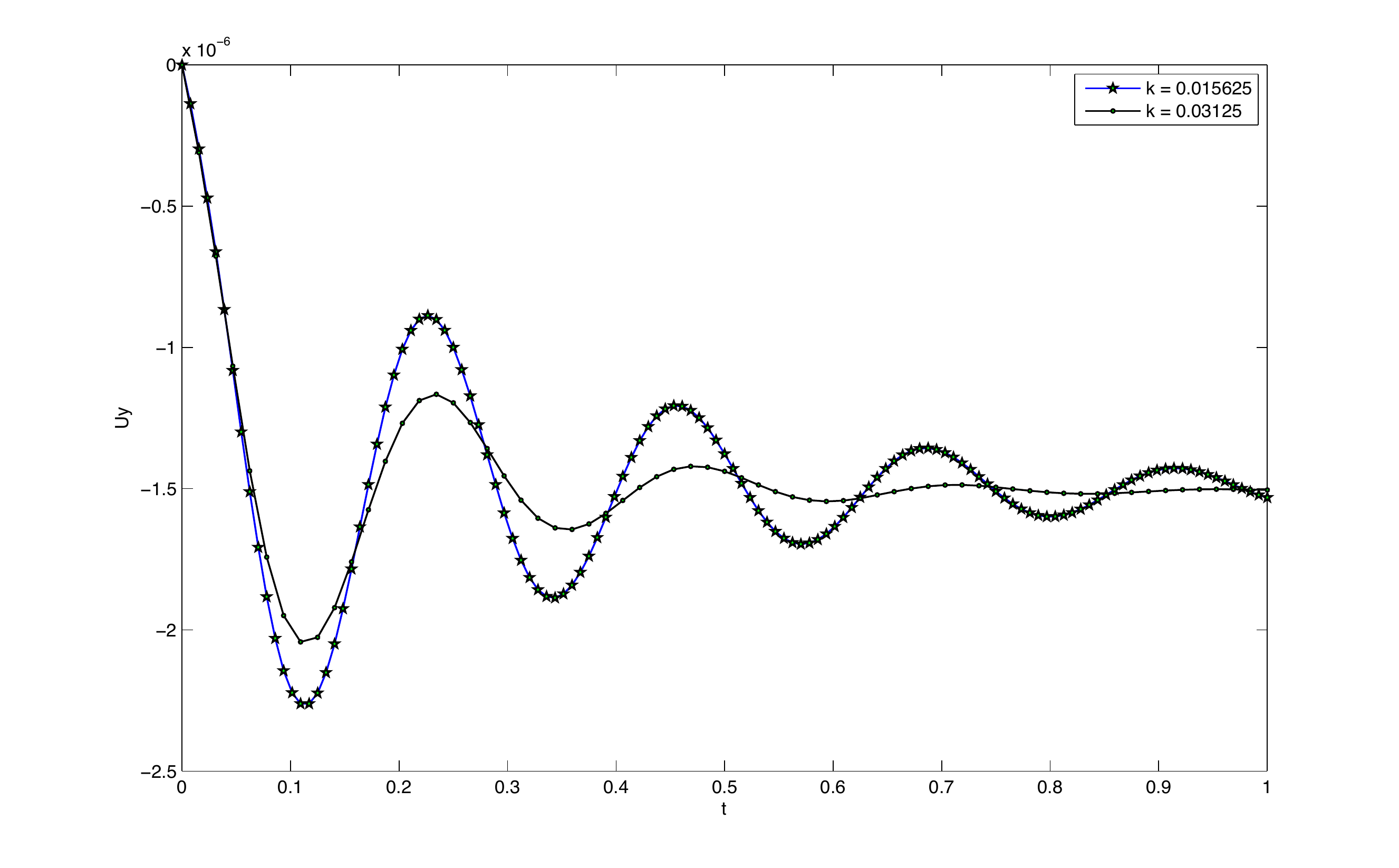}
  \end{center}
       \caption{Oscilatory behavior of point $(1,1)$ of the 2D unit square domain.}
\end{figure}

\begin{figure}[htbp]
  \begin{center}
       \includegraphics[width=10cm,height=6cm]
                              {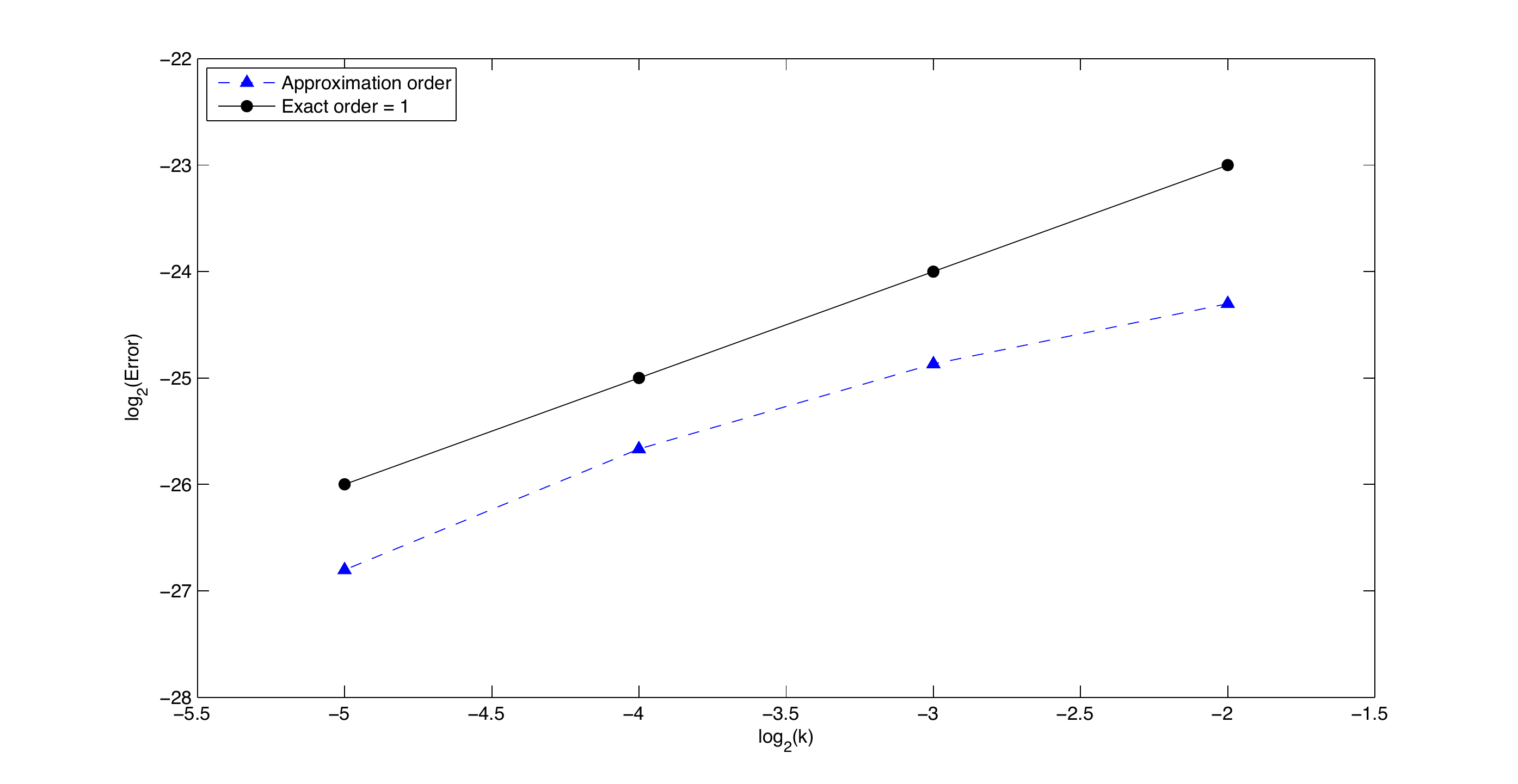}
  \end{center}
       \caption{Rate of convergence of temporal discretization.}
\end{figure}

\newpage


\begin{thebibliography}{10}

\bibitem{AdolfssonEnelundLarsson2003}
K.~Adolfsson, M.~Enelund, and S.~Larsson, \emph{Adaptive discretization of an
  integro-differential equation with a weakly singular convolution kernel},
  Comput. Methods Appl. Mech. Engrg. \textbf{192} (2003), 5285--5304.

\bibitem{AdolfssonEnelundLarsson2008}
K.~Adolfsson, M.~Enelund, and S.~Larsson, \emph{Space-time discretization of an
  integro-differential equation modeling quasi-static fractional-order
  viscoelasticity}, J. Vib. Control \textbf{14} (2008), 1631--1649.

\bibitem{AdolfssonEnelundLarssonRacheva2006}
K.~Adolfsson, M.~Enelund, S.~Larsson, and M.~Racheva, \emph{Discretization of
  integro-differential equations modeling dynamic fractional order
  viscoelasticity}, LNCS \textbf{3743} (2006), 76--83.

\bibitem{BagleyTorvik1983}
R.~L. Bagley and P.~J. Torvik, \emph{Fractional calculus--a different approach
  to the analysis of viscoelastically damped structures}, AIAA J. \textbf{21}
  (1983), 741--748.

\bibitem{StigFardin2010}
S.~Larsson and F.~Saedpanah, \emph{The continuous {G}alerkin method for an
  integro-differential equation modeling dynamic fractional order
  viscoelasticity}, IMA J. Numer. Anal. \textbf{30} (2010), 964--986.

\bibitem{LinThomeeWahlbin1991}
Y.~Lin, V.~Thom\'ee, and L.~B. Wahlbin, \emph{Ritz-volterra projections to
  finite-element spaces and application to integro-differential and related
  equations}, SIAM J. Numer. Anal. \textbf{28} (1991), 1047--1070.

\bibitem{Lubich1988}
C.~Lubich, \emph{Convolution quadrature and discretized operational calculus
  {I}}, Numer. Math. \textbf{52} (1988), 129--145.

\bibitem{McLeanSloanThomee2006}
W.~McLean, I.~H. Sloan, and V.~Thom{\'e}e, \emph{Time discretization via
  {L}aplace transformation of an integro-differential equation of parabolic
  type}, Numer. Math. \textbf{102} (2006), 497--522.

\bibitem{PaniThomeeWahlbin1992}
A.~K. Pani, V.~Thom\'ee, and L.~B. Wahlbin, \emph{Numerical methods for
  hyperbolic and parabolic integro-differential equations}, J. Integral
  Equations Appl. \textbf{4} (1992), 533--584.

\bibitem{RiviereShawWhiteman2007}
B.~Rivi\`ere, S.~Shaw, and J.~R. Whiteman, \emph{Discontinuous {G}alerkin
  finite element methods for dynamic linear solid viscoelasticity problems},
  Numer. Methods Partial Differential Equations \textbf{23} (2007), 1149--1166.

\bibitem{FardinBIT2013}
F.~Saedpanah, \emph{A posteriori error analysis for a continuous space-time
  finite element method for a hyperbolic integro-differential equation}, BIT
  Numer. Math. \textbf{53} (2013), 689--716.

\bibitem{FardinIMAJNUMANAL2014}
\bysame, \emph{Continuous {G}alerkin finite element methods for hyperbolic
  integro-differential equations}, IMA J. Numer. Anal. (2014), doi:
  10.1093/imanum/dru024.

\bibitem{FardinEJM2014}
\bysame, \emph{Well-posedness of an integro-differential equation with positive
  type kernels modeling fractional order viscoelasticity}, European J. Mech.-A
  Solid \textbf{44} (2014), 201--211.

\bibitem{SchadleLopezLubich2008}
A.~Sch\"adle, M.~L\'opez-Fern\'andez, and Ch. Lubich, \emph{Adaptive, fast, and
  oblivious convolution in evolution equations with memory}, SIAM J. Sci.
  Comput. \textbf{30} (2008), 1015--1037.

\bibitem{ShawWhiteman2004}
S.~Shaw and J.~R. Whiteman, \emph{A posteriori error estimates for space-time
  finite element approximation of quasistatic hereditary linear viscoelasticity
  problems}, Comput. Methods Appl. Mech. Engrg. \textbf{193} (2004),
  5551--5572.

\bibitem{SloanThomee1986}
I.~H. Sloan and V.~Thom\'ee, \emph{Time discretization of an
  integro-differential equation of parabolic type}, SIAM J. Numer. Anal.
  \textbf{23} (1986), 1052--1061.

\bibitem{Thomee_Book}
V.~Thom{\'e}e, \emph{Galerkin {F}inite {E}lement {M}ethods for {P}arabolic
  {P}roblems}, second ed., Springer Series in Computational Mathematics,
  vol.~25, Springer-Verlag, 2006.

\end{thebibliography}

\providecommand{\bysame}{\leavevmode\hbox to3em{\hrulefill}\thinspace}
\providecommand{\MR}{\relax\ifhmode\unskip\space\fi MR }
\providecommand{\MRhref}[2]{%
  \href{http://www.ams.org/mathscinet-getitem?mr=#1}{#2}
}
\providecommand{\href}[2]{#2}

\end{document}